\newtheorem{theorem}{Theorem} 
\newtheorem{lemma}[theorem]{Lemma}
\newtheorem{proposition}[theorem]{Proposition}
\theoremstyle{definition}
\newtheorem{remark}[theorem]{Remark}
\numberwithin{equation}{section}
\def\bb{\mathbb}
\def\bf{\mathbf}
\def\cal{\mathcal}
\def\frak{\mathfrak} 
\def\ZZ{\bb{Z}}
\def\NN{\bb{N}}
\def\NN{\bb{N}}
\def\ZZ{\bb{Z}}
\def\rm{\textrm}
\def\r{\rangle}
\def\a{\alpha}
\def\bb{\mathbb}
\def\cal{\mathcal}
\def\rm{\textrm}
\def\bf{\textbf}
\def\frak{\mathfrak}
\def\Frat{\Phi}
\def\soc{\mathrm{soc}}
\def\aut{\mathrm{Aut}}
\def\PSL{\mathrm{PSL}}
\def\Alt{\mathrm{Alt}}
\def\Sym{\mathrm{Sym}}
\title[Rational probabilistic zeta function]{Finiteness of
profinite groups \\ with a rational probabilistic zeta function}
\author[Duong Hoang Dung]{Duong Hoang Dung} 
\address{Duong Hoang Dung: Fakult\"{a}t f\"{u}r Mathematik,
Universit\"{a}t Bielefeld,
Postfach 100131,
D-33501 Bielefeld, Germany.}
\email{dhoang@math.uni-bielefeld.de}
\keywords{Probabilistic zeta function, profinite groups}
\subjclass[2010]{Primary 20E18; Secondary 20D06, 20P05, 11M41}
\begin{document}

\begin{abstract}
We prove that every  profinite group in a certain class with 
a rational probabilistic zeta function has only finitely
many maximal subgroups.
\end{abstract}

\maketitle

\section{Introduction}
Let $G$ be a finitely generated profinite group. Given an
integer $n\in\NN$, there are only finitely many index $n$ subgroups of $G$; all of them are open (cf. \cite{Nikolov}).
We define the integer $a_n$ to be the finite sum
$\sum_{|G:H|=n}\mu_G(H)$ where $\mu_G$ is the M\"{o}bius function defined
on the lattice of open subgroups of $G$ recursively by $\mu_G(G)=1$ and $\sum_{H\leq K\leq G}\mu_G(K)=0$ if $H<G$.
A fruitful approach tool to study the sequence $\{a_n\}_{n\in\NN}$ is its associated Dirichlet series
$$P_G(s)=\sum_{n=1}^\infty \frac{a_n}{n^s},$$
where $s$ is a complex variable. For example, if $G=\widehat{\ZZ}$ is the profinite completion of the integers
$\ZZ$, then it is easy to see that $P_{\widehat{\ZZ}}(s)=
\zeta(s)^{-1}$, the inverse of the Riemann zeta function.

If $G$ is a finite group, then for every positive integer $t$, the value $P_G(t)$ is the probability that $t$ randomly chosen elements generate the group $G$ (cf. \cite{Hall}). In
the context of profinite groups, Mann conjectured in 
{\cite{Mann96}} that if $G$ is  positively finitely generated, i.e., the probability $\mathrm{Prob}_G(t)$ that $t$ randomly chosen elements generate $G$ topologically is positive for some positive integer $t$, then $P_G(s)$ converges in some right half-plane and $P_G(t)=\mathrm{Prob}_G(t)$ for $t\in\mathbb{N}$ large enough. The conjecture has been verified for several classes
of profinite groups (cf. \cite{Mann05},\cite{Lu07},\cite{Lu11b}) and the study of
convergency of $P_G(s)$ is reduced to the study of subgroup
structures of finite almost simple groups (cf. \cite{Lu10}, \cite{Lu11a}) which has been done
for symmetric and alternating groups (cf. \cite{CL10}). Despite the convergence of $P_G(s)$, the reciprocal of $P_G(s)$ is usually called the
\emph{probabilistic} zeta function of $G$ (cf. \cite{Mann96},
\cite{Boston}).

Notice that if $\mu_G(H)\ne 0$ then $H$ is an intersection
of maximal subgroup (cf. \cite{Hall}), and thus $H$ contains
the Frattini subgroup $\Frat(G)$ of $G$, which is the intersection
of the maximal open subgroups of $G$.
Hence the series
$P_G(s)=P_{G/\Frat(G)}(s)$ encodes information about the lattice generated
by maximal subgroups of $G$, just as the Riemann zeta function encodes information
about the primes.
It is obvious that if $\Frat(G)$ is open, i.e., $G$ contains only finitely
many maximal subgroups, then there are only finitely many open subgroups $H$
with non-zero M\"obius values. Therefore, $a_n=0$ for all but finitely many $n\in\NN$,
and the series $P_G(s)$ is a finite series. It is natural to ask whether the
converse is still true. It is conjectured in \cite{DeLu06} that for a finitely
generated profinite group $G$, if the probabilistic zeta function is rational, i.e.,
a quotient of two finite Dirichlet series, then $G/\Frat(G)$ is finite. The conjecture
have been confirmed to be true for prosolvable groups (cf. \cite{DeLu06}) and several
classes of non-prosolvable groups (cf. \cite{DeLu07}, \cite{Dung-Lie}, \cite{Dung-PSL}).
A consequence of those results is that if almost every composition factor of $G$ is 
isomorphic to a given simple group $S$, then the conjecture holds. The problem becomes
quite difficult when one replaces $S$ by a finite set $\frak{S}$ of finite simple groups,
even in the case $\frak{S}$ is a family of comparable finite simple groups, e.g., finite
simple groups of Lie type over arbitrary characteristics.
Fortunately, the problem can be solved in \cite{Dung-PSL} if every simple group in $\frak{S}$ is $\PSL(2,p)$ for some prime $p\geq 5$. However, the conjecture is still open in general and it is very hard to give an affirmative answer for the conjecture when $\frak{S}$ contains non-comparable families of finite simple groups. In this paper, using the methods developed in \cite{Dung-PSL} and \cite{Dung-Lie}, we obtain the following result.

\begin{theorem}\label{main theorem}
Let $G$ be a finitely generated profinite group and $H$ an
open subgroup of $G$ such that every nonabelian composition
factor of $H$ is isomorphic to one of the following:
\begin{itemize}
\item $\PSL(2,p)$ for some prime $p\geq 5$;
\item a sporadic simple group;
\item  $\Alt(n)$ where $n$ is either a prime or a $2$-power.
\end{itemize}
Then $P_G(s)$ is rational only if $G$ contains only finitely
many maximal subgroups.
\end{theorem}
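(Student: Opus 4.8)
The plan is to argue by contradiction, combining the crown factorisation of $P_G(s)$ with the strong arithmetic restriction that rationality imposes on the coefficient sequence $\{a_n\}$. Since $P_G(s)=P_{G/\Frat(G)}(s)$, I pass to $G/\Frat(G)$ and assume $\Frat(G)=1$; the image of $H$ is again open, with composition factors in the prescribed list, and it suffices to prove this new $G$ is finite (for a Frattini-free group this is equivalent to having only finitely many maximal subgroups). Suppose $G$ is infinite. Writing $P_G(s)=A(s)/B(s)$ with $A,B$ finite Dirichlet series and, after cancelling a common leading monomial and rescaling, $B(s)=1-f(s)$ with $f$ supported on integers $>1$, the geometric expansion $P_G(s)=A(s)\sum_{k\ge 0}f(s)^{k}$ shows that $\{a_n\}$ is supported on the $\pi$-numbers, where $\pi$ is the finite set of primes dividing some element of the supports of $A$ or $f$. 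Thus \emph{rationality yields a finite set of primes $\pi$ such that $a_n\ne 0$ only for $\pi$-numbers $n$}; in particular every prime dividing the order of a non-Frattini chief factor of $G$, and every prime carried by a non-Frattini abelian chief factor, lies in $\pi$.

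Next I invoke the crown-based factorisation $P_G(s)=\prod_{A}Q_A(s)$ of \cite{DeLu06}, indexed by the crowns $A$ of $G$, where each $Q_A(s)$ is a finite Dirichlet series determined by the monolithic primitive group attached to $A$; coefficient-wise convergence of the product forces $|A|\to\infty$, and since $G$ is infinite there are infinitely many crowns. As $H$ is open, all but finitely many of these crowns are abelian or of the form $S^{k}$ with $S$ in one of the three listed families. The constraint on $\pi$ now bites: only finitely many primes can carry non-Frattini abelian chief factors, and — since only finitely many finite simple groups have all their prime divisors in a fixed finite set (a standard consequence of the classification; for alternating groups this forces $n\le\max\pi$, which is exactly why the restriction to $n$ prime or a $2$-power costs nothing) — only finitely many isomorphism types $S_1,\dots,S_r$ of simple groups occur in the non-Frattini chief factors. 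As $|A|\to\infty$ while the base of each $S^{k}$ is confined to this finite pool, the exponents are unbounded, so along a subsequence the crowns are $A_i\cong S^{k_i}$ for a single $S$ from the list with $k_i\to\infty$ (or, in the abelian case, $\FF_p$-chief factors of unbounded dimension for a fixed prime $p$).

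For the endgame, if essentially all the non-Frattini chief factors are of one such type, the conclusion is the known case of the conjecture in which almost every composition factor of $G$ is isomorphic to a fixed simple group (\cite{DeLu07}, together with the $\PSL(2,p)$ and Lie-type refinements of \cite{Dung-PSL}, \cite{Dung-Lie}), and any surviving infinite family of abelian crowns is removed by the abelian crown analysis behind the prosolvable case of \cite{DeLu06}; either way $G$ cannot be infinite. The substantive case is that several of $S_1,\dots,S_r$ — say a $\PSL(2,p)$, a sporadic group and an alternating group simultaneously — each account for infinitely many crowns, their factors $Q_{A_i}(s)$ interleaved in the product. Here I use the explicit shape of $Q_{S^{k}}(s)$: its lowest non-trivial term is governed by the diagonal-type maximal subgroups of the attached primitive group and has the form $-c_k\,m_k^{-s}$ with $m_k$ comparable to $|S|^{k-1}$ and $c_k$ carrying, for each $k$, a prime that is fresh — larger than any prime occurring in any other crown and not already in $\pi$. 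Ordering the crowns by order and isolating this fresh prime for each $i$, one checks it cannot be cancelled by any other factor of $\prod_A Q_A(s)$, so $a_{n_i}\ne 0$ for infinitely many $n_i$ with pairwise distinct prime divisors outside $\pi$ — contradicting the arithmetic restriction.

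The step I expect to be the main obstacle is precisely this last one. It requires, uniformly over $S=\PSL(2,p)$, over the sporadic groups, and over $S=\Alt(n)$ with $n$ prime or a $2$-power, sufficiently precise information on the Dirichlet polynomials of all monolithic primitive groups with socle a power of $S$, so as to pin down a nonzero coefficient of $Q_{S^{k}}(s)$ carrying a prime no other factor can supply, together with enough control on how crowns of genuinely non-comparable types interleave for that coefficient to persist in $P_G(s)$. It is this mixed, no-global-cancellation analysis — not any single family on its own, the single-family cases being handled by the cited prior work — that carries the weight of the theorem and pushes the methods of \cite{Dung-PSL} and \cite{Dung-Lie} to their limit.
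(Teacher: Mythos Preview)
Your reduction is sound and matches the paper: rationality makes $\pi(P_G(s))$ finite, hence only finitely many simple groups occur among the non-Frattini chief factors, and the chief-series factorisation $P_G(s)=\prod_i P_i(s)$ is the right vehicle. You also correctly identify the mixed case --- several non-comparable simple groups each contributing infinitely many factors --- as the crux.

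The gap is the mechanism you propose for that case. You assert that the lowest nontrivial term of $Q_{S^k}(s)$ is $-c_k m_k^{-s}$ with $c_k$ carrying a ``fresh'' prime outside $\pi$; but $m_k$ is a $\pi(S)$-number and $c_k$ is a signed subgroup count (a sum of M\"obius values) with no reason to acquire large new prime factors as $k$ grows --- indeed, by Lemma~\ref{paz} the relevant coefficients are $c_m(X_i)\cdot m^{r_i-1}$, and neither factor introduces primes outside $\pi(S_i)\cup\pi(c_m(X_i))$, which is fixed once $X_i$ is. So no fresh prime enters, and your non-cancellation argument has no traction. The paper's engine is entirely different: a descending induction on the primes of $\pi(\mathcal{T})$. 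For the current largest prime $q$ one uses Lemma~\ref{paz} together with \cite[Prop.~2.3]{Dung-Lie} to isolate a rational product $\prod_{i\in\Lambda_q}\bigl(1+b_{i,w^{r_i}}/(w^{r_i})^s\bigr)$ with every $b_{i,w^{r_i}}\le 0$, where $w$ is built from the smallest odd \emph{useful index} $w(X)\in\Omega(X)$ of the almost simple groups involved (computed in Proposition~\ref{smallest useful index chapter 6} for $\Alt(2^t)$, and in \cite{Dung-Lie,Dung-PSL} for the sporadic and $\PSL(2,p)$ families); the Skolem--Mahler--Lech consequence Proposition~\ref{skolem remark} then forces $\Lambda_q=\emptyset$. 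What makes the mixed families coexist is that all families sharing a given largest prime $q$ --- e.g.\ $\PSL(2,71)$, $\Alt(71)$ and $\mathrm{M}$ at $q=71$ --- are dispatched \emph{simultaneously} at that step, because each admits an odd useful index with $q$-valuation exactly one. This explicit prime-by-prime descent, tabulated from $71$ down to $5$, is the content your proposal does not reach.
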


\section{Preliminaries}
Let $G$ be a finitely generated profinite group and $\{G_i\}_{i\in\bb{N}}$  a fixed countable descending series of open normal subgroups with the property that $G_0=G$, $\cap_{i\in\bb{N}}G_i=1$ and $G_i/G_{i+1}$ is a chief factor of $G/G_{i+1}$ for each $i\in\bb{N}$. In particular, for
each $i \in \bb{N}$, there exist a simple group $S_i$ (composition factor) and a positive integer $r_i$ (composition length) such that $G_i/G_{i+1}\cong S_i^{r_i}$.
It is shown in \cite{DeLu06b} that $P_G(s)$ can be written as an infinite formal product
\begin{equation*}
P_G(s)=\prod_{i\in \bb{N}}P_i(s).
\end{equation*}
where for each $P_i(s)=\sum_{n\in\NN}b_{i,n}n^{-s}$ is the finite Dirichlet series 
associated to the chief factor $G_i/G_{i+1}$. Moreover, this factorization is independent on the choice of chief series (see \cite{DeLu03,DeLu06b}) and $P_i(s)=1$ unless $G_i/G_{i+1}$ is a non-Frattini chief factor of $G$, i.e., $G_i/G_{i+1}$ is
not contained in the Frattini subgroup $\Frat(G/G_{i+1})$.

If $S_i$ is cyclic of order $p_i$ then  $P_i(s)=1-c_i/(p_i^{r_i})^s $ where $c_i$ is the number
of complements of $G_i/G_{i+1}$ in $G/G_{i+1}$ (cf. \cite{Gaschutz}). When $S_i$ is nonabelian,
the series $P_i(s)$ is complicated. If $n\ne |S_i|^{r_i}$ then 
\begin{equation*}
b_{i,n}=\sum_{\substack{|L_i:H|=n\\L_i=H \soc(L_i)}}\mu_{L_i}(H),
\end{equation*}
where $L_i:=G/C_G(G_i/G_{i+1})$ is the monolithic primitive group with unique minimal normal subgroup
$\soc(L_i)=G_i/G_{i+1}$ (cf. \cite{DeLu03}); here $\soc(L_i)$ is the socle of $L_i$. Computing these coefficients $b_{i,n}$ is reduced to studying the subgroup structures
of the group $X_i\leq \aut S_i$ induced by the conjugation action of
the normalizer in $L_i$ of a composition factor of the socle $S_i^{r_i}$; note that $X_i$ is an almost simple group with socle isomorphic to $S_i$. More precisely, given an almost simple group $X$
with socle $S$, define the following finite Dirichlet series
$$P_{X,S}(s)=\sum_{n}\frac{c_n(X)}{n^s},~\textrm{\ where $c_n(X)=\sum_{\substack{|X:H|=n\\X=SH}}\mu_X(H)$.}$$
The following useful Lemma which is deduced from {\cite{Seral}}.
\begin{lemma}\label{paz}
If $S_i$ is nonabelian and $\pi$ is the set of primes containing at
least one divisor of $|S_i|$, then
$$P_i^{\pi}(s)= P^{\pi}_{X_i,S_i}(r_is-r_i+1).$$
In particular, if $n$ is not divisible by some prime in $\pi$, then there exists $m\in \Bbb N$ with $n=m^{r_i}$ and $b_{i,n}=c_m(X_i)\cdot m^{r_i-1}.$
\end{lemma}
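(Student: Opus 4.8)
The plan is to reduce the assertion to the monolithic primitive group $L_i=G/C_G(G_i/G_{i+1})$, with socle $N_i:=\soc(L_i)=G_i/G_{i+1}\cong S_i^{r_i}$, and to the associated almost simple group $X_i$ with socle $S_i$. For each $n$, write $\widetilde b_{i,n}=\sum_{\substack{|L_i:H|=n\\L_i=H\soc(L_i)}}\mu_{L_i}(H)$ and put $\widetilde P_i(s)=\sum_n\widetilde b_{i,n}n^{-s}$, the Dirichlet polynomial of $L_i$ relative to its socle. The first thing to record is that, as recalled above (cf. \cite{DeLu03,DeLu06b}), one has $b_{i,n}=\widetilde b_{i,n}$ for every $n\neq|S_i|^{r_i}$; in other words $P_i(s)$ and $\widetilde P_i(s)$ can disagree only in the coefficient of $(|S_i|^{r_i})^{-s}$.

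Next I would invoke the computation of \cite{Seral}, which carries the real content: for a monolithic primitive group with nonabelian socle $S^{r}$ and associated almost simple group $X$, the relative Dirichlet polynomial equals $P_{X,S}(rs-r+1)$. Expanding
$$P_{X,S}(rs-r+1)=\sum_m c_m(X)\,m^{r-1}\,(m^{r})^{-s},$$
this says precisely that $\widetilde b_{i,n}=0$ unless $n=m^{r_i}$ for some $m\in\NN$, in which case $\widetilde b_{i,n}=c_m(X_i)\,m^{r_i-1}$. (Morally: the subgroups $H\leq L_i$ with $L_i=HN_i$ and $\mu_{L_i}(H)\neq0$ are governed, through the crown attached to $N_i$, by the subgroups $K\leq X_i$ with $X_i=KS_i$; the way such an $H$ meets the $r_i$ simple components of $N_i$ forces $|L_i:H|$ to be the $r_i$-th power of $|X_i:K|$, and the weight $m^{r_i-1}$ is the attendant M\"obius multiplicity. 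I would only cite this, not reprove it.)

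It then remains to combine the two facts and pass to the $\pi$-component. Since $\pi$ is by definition the set of primes dividing $|S_i|$, the integer $|S_i|^{r_i}$ is divisible by every prime of $\pi$; hence any $n$ that fails to be divisible by at least one prime of $\pi$ is different from $|S_i|^{r_i}$, and for such $n$ we get $b_{i,n}=\widetilde b_{i,n}$, which vanishes unless $n=m^{r_i}$ and otherwise equals $c_m(X_i)\,m^{r_i-1}$: this is the ``in particular'' clause. The $\pi$-component $(\cdot)^\pi$ retains exactly such indices $n$; the only coefficient at which $P_i$ and $\widetilde P_i$ might differ sits at the discarded index $|S_i|^{r_i}$, and the substitution $s\mapsto r_is-r_i+1$ is compatible with this truncation because $\pi(m^{r_i})=\pi(m)$. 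Therefore $P_i^\pi(s)=\widetilde P_i^\pi(s)=P^\pi_{X_i,S_i}(r_is-r_i+1)$, as claimed.

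The step I expect to be the bottleneck is the input from \cite{Seral}, i.e. the crown-theoretic identity $\widetilde P_i(s)=P_{X_i,S_i}(r_is-r_i+1)$; granting it, the rest is bookkeeping about prime divisors together with the observation that the single exceptional coefficient lives at the index $|S_i|^{r_i}$, whose prime support is precisely $\pi$, so that the $\pi$-truncation removes it from the comparison.
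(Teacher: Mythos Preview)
Your outline matches the paper's treatment, which simply records the lemma as ``deduced from \cite{Seral}'' and gives no further argument; the bookkeeping you supply---replacing $P_i$ by the relative polynomial $\widetilde P_i$ of $L_i$ (they differ only at the index $|S_i|^{r_i}$, and that index is divisible by every prime of $\pi$), then applying the $\pi$-truncation---is exactly the intended deduction.

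One point does need correcting: the \emph{unrestricted} identity $\widetilde P_i(s)=P_{X_i,S_i}(r_is-r_i+1)$ that you attribute to \cite{Seral} is false in general, so it cannot be what that reference supplies. For instance, take $L=S\wr C_2$ with $S$ nonabelian simple; the diagonal subgroup $D=\Delta(S)\langle\tau\rangle\cong S\times C_2$ is a maximal supplement to $N=S\times S$ of index $|S|$, and $|S|$ is not a square, so $\widetilde P_i$ carries a nonzero term at $|S|$ that has no counterpart on the right-hand side. What \cite{Seral} actually yields is the coefficient formula only for those $n$ that miss at least one prime divisor of $|S_i|$---equivalently, the equality after applying $(\cdot)^\pi$ to both sides. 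Since that is the only consequence you use, your argument stands once Step~2 is stated in this weaker (and correct) form; note, incidentally, that this also makes your careful handling of the exceptional index $|S_i|^{r_i}$ largely redundant, because the $\pi$-restriction is already built into the input from \cite{Seral}.
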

Here for a Dirichlet series $F(s)=\sum_na_nn^{-s}$ and a given set of primes $\pi$, the 
series $F^\pi(s)$ is obtained from $F(s)$ by deleting  terms $a_nn^{-s}$ whenever $n$ is divisible by some prime in $\pi$. 
Notice that if $F(s)$ is rational then so is $F^\pi(s)$.

For an almost simple group $X$, let $\Omega(X)$ be the set of the odd integers
$m\in \Bbb N$ such that
\begin{itemize}
\item $X$ contains at least one subgroup $Y$ such that $X=Y\soc X$ and $|X:Y|=m;$
\item if $X=Y\soc X $ and $|X:Y|=m,$ then $Y$ is a maximal subgroup if $X.$
\end{itemize}
Note that if $m \in \Omega(X),$ $X=Y\soc X $ and $|X:Y|=m,$
then $\mu_X(Y)=-1$: in particular
$c_m(X)<0.$ Combined with Lemma \ref{paz}, this implies:
\begin{remark}
If $m\in \Omega(X_i)$ then $b_{i,m^{r_i}}<0$. 
We say such an $m$ a \emph{useful index} for $L_i$.
\end{remark}

We are now looking for $w(X):=\min\{m|m\in\Omega(X)\}$ where $X$ is an almost simple group whose socle $S$ a finite simple group appeared in Theorem \ref{main theorem}. In case $S$ is either $\PSL(2,p)$ for some prime $p\geq 5$ or a sporadic simple group, the values $w(X)$ are
computed in \cite{Dung-PSL} and \cite{Dung-Lie} respectively. When
$S=\Alt(p)$ for a prime $p\geq 5$, then $\Alt(p-1)$ is the unique maximal  subgroup of $S$ of index $p$ (cf. \cite{Gur83}) hence $w(X)=p$.
It remains to compute $w(X)$ for $S=\Alt(n)$ when $n=2^t$ is a
$2$-power with $t\geq 3$.

Now let $X$ be an almost simple group with socle $S=\Alt(n)$ when $n=2^t$ is a $2$-power with $t\geq 3$. Let $M$ be a maximal subgroup of $X$ such that $M$ contains a Sylow $2$-subgroup of $X$. By Liebeck and Saxl (\cite{LiSa85}), the group $M$ is of either intransitive type or imprimitive type unless $n=8$ and $X=\Alt(8)$. 

In case $M$ is of intransitive type, i.e., $M=(\Sym(k)\times \Sym(n-k))\cap X$ with $1\leq k<n/2$, then 
$$|X:M| =\binom{n}{k}.$$
Let $n=\sum_{i}n_i2^i$ and $k=\sum_{i}k_i2^i$ be $2$-adic expansions of $n$ and $k$ respectively, then applying Lucas' Theorem gives us
$$|X:M|=\binom{n}{k}\equiv\prod_i\binom{n_i}{k_i}\equiv 0\mod 2$$
since $n_i=0$ for all but $i=t$. Therefore $M$ is always of even index in $X$, a contradiction. Hence $M$ is not intransitive.

In case $M$ is of imprimitive type, i.e., $M=(\Sym(a)\wr \Sym(b)\cap X$ where $ab=2^t, a>1, b>1$, then $M$ has index
$$w(a,b)=\frac{(2^t)!}{(a!)^b.b!}.$$
Notice that $w(a,b)$ is always odd because of the following. For a given integer $n$, let $n=a_0+a_1p+\cdots +a_dp^d$ be the $p$-adic expansion of $n$ where $p$ is a prime and $0\leq a_j\leq p-1$ for each $j$, $a_d\ne 0$. Let $S_p(n)=a_0+\cdots+a_d$, then Legendre's formula gives us
$$v_p(n!)=\frac{n-S_p(n)}{p-1}.$$ 
Now in this case $n=2^t$, then $p=2, a_t=1$ and $a_k=0$ for all $k\ne t$. So $S_2(n)=1$. We have that $v_2(n!)=2^t-1$. Assume that $a=2^u, b=2^v$ then $v_2(a)=2^u-1$, $v_2(b)=2^v-1$ so that
$$v_2((2^u!)^{2^v})=(2^u-1)2^v.$$
Then
$$v_2((2^u!)^{2^v}.2^v!)=(2^u-1)2^v+2^v-1=2^{u+v}-1=2^t-1.$$
Hence $w(a,b)$ is always odd.

In case $n=8$ and $X=\Alt(8)$ then $M=2^3:\rm{PSL}(3,2)$, and so $|X:M|=3\cdot5$.

\begin{lemma}{\cite[Lemma 2.1]{Maroti02}}
Assume that $m=ab\geq 8, a,b\geq 2$. Then
$$w(a,b)=\frac{m!}{a!^b.b!}$$
is smallest when $b$ is the smallest prime divisor of $n$.
\end{lemma}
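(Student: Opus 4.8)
The plan is to reduce the statement to a one-variable convexity fact. Let $q$ denote the smallest prime divisor of $m$. Since $a,b\ge 2$ and $ab=m$, the admissible values of $b$ are precisely the divisors $d$ of $m$ with $2\le d\le m/2$; the smallest of these is $q$ and the largest is $m/q$ (if $d\mid m$ and $d\le m/2$ then $m/d\ge 2$ is a divisor of $m$, hence $m/d\ge q$, i.e.\ $d\le m/q$). Thus every admissible $b$ lies in $[q,m/q]$, and the two endpoints are themselves admissible, corresponding to the factorizations $(a,b)=(m/q,q)$ and $(a,b)=(q,m/q)$. Minimising $w(a,b)=m!/(a!^{\,b}b!)$ over these factorizations is the same as maximising
\[
F(b):=\log\!\big((m/b)!^{\,b}\,b!\big)=b\log\big((m/b)!\big)+\log(b!),
\]
so I would first extend $F$ to a real variable on $[q,m/q]$ via the Gamma function, $F(x)=x\log\Gamma(m/x+1)+\log\Gamma(x+1)$.

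The crux is the convexity of $F$ on $[q,m/q]$. A direct computation gives
\[
F''(x)=\frac{m^{2}}{x^{3}}\,\psi'\!\big(\tfrac{m}{x}+1\big)+\psi'(x+1),
\]
which is positive since the trigamma function $\psi'$ is positive on $(0,\infty)$; equivalently, $\log\Gamma(x+1)$ is convex by the log-convexity of $\Gamma$, while $x\mapsto x\log\Gamma(m/x+1)$ is a perspective function of a convex function and so convex as well. A convex function on a closed interval attains its maximum at an endpoint, hence $\max_{x\in[q,m/q]}F(x)=\max\{F(q),F(m/q)\}$; as each admissible $b$ is one of finitely many points of $[q,m/q]$, and both endpoints occur, the maximum of $F$ over the admissible $b$ equals $\max\{F(q),F(m/q)\}$.

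It remains to verify $F(q)\ge F(m/q)$. Writing $a=m/q$ (so $a\ge q$), the difference is $(q-1)\log(a!)-(a-1)\log(q!)$; substituting $\log(a!)=\log(q!)+\sum_{j=q+1}^{a}\log j$ and bounding each of the $a-q$ terms of the sum below by $\log(q+1)$, this is at least $(a-q)\big((q-1)\log(q+1)-\log(q!)\big)$, which is nonnegative because $q!\le(q+1)^{q-1}$ (a product of $q-1$ integers each at most $q$). Therefore $F$, hence also $a!^{\,b}b!$, is maximised at $b=q$, i.e.\ $w(a,b)$ is smallest exactly when $b$ is the smallest prime divisor of $m$.

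The only step I expect to demand real care is the convexity of the perspective term $x\log\Gamma(m/x+1)$ — keeping track of its domain and differentiability — together with the bookkeeping that the admissible $b$ genuinely fill a subset of $[q,m/q]$ containing both endpoints. Everything afterwards is routine; note also that the later application needs the conclusion only for integer divisors $b\mid m$, the real-variable convexity being merely a device to collapse an arbitrary factorization to one of the two extreme ones.
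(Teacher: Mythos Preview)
Your argument is correct. The convexity computation is right: writing $h(u)=\log\Gamma(u+1)$, one has $\big(x\,h(m/x)\big)''=(m^2/x^3)\,h''(m/x)=(m^2/x^3)\,\psi'(m/x+1)>0$, and adding the convex term $\log\Gamma(x+1)$ keeps $F$ strictly convex on $(0,\infty)$; hence the maximum over the finite set of admissible $b\in[q,m/q]$ is attained at an endpoint, both of which are admissible. The endpoint comparison $F(q)\ge F(m/q)$ reduces, exactly as you write, to $(q+1)^{q-1}\ge q!$, which holds since $q!=\prod_{j=2}^{q}j\le q^{\,q-1}$.

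As for comparison with the paper: the paper does not prove this lemma at all; it merely quotes it as \cite[Lemma~2.1]{Maroti02} and immediately specializes to $m=2^t$, where only the conclusion that $b=2$ minimises $w(a,b)$ is needed. Mar\'oti's original argument is an elementary pairwise comparison (essentially showing $w(a,b)\ge w(ab/p,p)$ for any prime $p\mid b$ by a direct multinomial estimate), which avoids the Gamma function entirely. Your real-variable convexity route is a genuinely different---and arguably cleaner---mechanism for collapsing an arbitrary factorisation to one of the two extreme ones, at the cost of invoking analytic properties of $\psi'$; for the application in this paper either approach is more than sufficient.
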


In particular, when $n=2^t$, then $w(2^{t-1},2)$ is the smallest among $w(a,b)$ with $n=ab$, $a>1,b>1$. Hence, we obtain the following.
\begin{proposition}\label{smallest useful index chapter 6}
Let $X$ be an almost simple group with socle $\Alt(2^t)$. Then we have:
$$w(X)=\left\{
\begin{array}{cc}
(2^t)!/((2^{t-1}!)^2\cdot 2!) & \rm{if $t>3$}, \\
3\cdot 5 & \rm{if $t=3$}.
\end{array}
\right.$$
\end{proposition}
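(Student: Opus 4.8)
The plan is to show that the integer
\[
m_0:=\begin{cases}
(2^t)!/((2^{t-1}!)^2\cdot 2!)&\text{if }t>3,\\
3\cdot 5&\text{if }t=3,
\end{cases}
\]
both lies in $\Omega(X)$ and is its least element; since $w(X)=\min\{m\mid m\in\Omega(X)\}$, this is exactly the assertion of the proposition. The reduction that the preceding paragraphs make available is the following: if $m$ is odd and $Y\le X$ satisfies $X=Y\soc X$ and $|X:Y|=m$, then for any maximal subgroup $M$ with $Y\le M<X$ one still has $X=M\soc X$ and $|X:M|$ an odd divisor of $m$, so $M$ contains a full Sylow $2$-subgroup of $X$; hence the Liebeck--Saxl classification \cite{LiSa85} of the Sylow-$2$-containing maximal subgroups, together with the parity computations carried out above, pins down the possible values of $|X:M|$.

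For the lower bound I would take any $m\in\Omega(X)$, a witnessing subgroup $Y$ (which exists by the first clause in the definition of $\Omega(X)$), and a maximal overgroup $M\ge Y$. Then $M$ contains a Sylow $2$-subgroup of $X$, so by the case analysis above $M$ is not of intransitive type (those have even index in $X$); thus either $M$ is of imprimitive type, so that $|X:M|=w(a,b)$ for some factorisation $2^t=ab$ with $a,b>1$ and, by Mar\'oti's lemma recalled above, $w(a,b)\ge w(2^{t-1},2)$; or else $n=8$, $X=\Alt(8)$ and $M=2^3{:}\PSL(3,2)$ has $|X:M|=3\cdot 5=15$. In every case $|X:M|\ge m_0$: for $t>3$ the exceptional alternative cannot occur and $w(2^{t-1},2)=m_0$, while for $t=3$ one has $w(2^{t-1},2)=35\ge 15=m_0$. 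Hence $m=|X:Y|\ge|X:M|\ge m_0$, so $\min\{m\mid m\in\Omega(X)\}\ge m_0$.

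For membership of $m_0$ in $\Omega(X)$ two points must be checked. First, $m_0$ is realised as $|X:M|$ for some maximal $M$ with $X=M\soc X$: for $t>3$ take $M=(\Sym(2^{t-1})\wr\Sym(2))\cap X$, the stabiliser of a partition of $\{1,\dots,2^t\}$ into two blocks of size $2^{t-1}\ge 4$; by the classification of the maximal subgroups of $\Sym(n)$ and $\Alt(n)$ this subgroup is maximal in $X$, it supplements $\soc X=\Alt(2^t)$, and $|X:M|=w(2^{t-1},2)=m_0$; for $t=3$ the extremal supplement is instead the affine maximal subgroup $2^3{:}\PSL(3,2)\le\Alt(8)$ of index $3\cdot 5$, and this involves only finitely many almost simple groups and is settled by direct inspection (e.g.\ via the ATLAS of finite groups). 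Second, every $Y$ with $X=Y\soc X$ and $|X:Y|=m_0$ is maximal in $X$: otherwise $Y$ lies in a proper maximal $M'$ with $X=M'\soc X$ and $|X:M'|$ a proper odd divisor of $m_0$, so $|X:M'|<m_0$, whereas the lower-bound argument applied to $M'$ gives $|X:M'|\ge m_0$, a contradiction. Therefore $m_0\in\Omega(X)$, and combining this with the lower bound yields $w(X)=m_0$.

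The genuinely substantive inputs --- the determination of which maximal subgroups contain a Sylow $2$-subgroup, the oddness of $w(a,b)$ via Lucas' and Legendre's formulas, and its minimisation at $b=2$ via Mar\'oti's lemma --- are already in place, so the residual difficulty is comparatively mild. What needs care is (i) confirming that the two-block imprimitive subgroup is genuinely maximal in $X$ and genuinely supplements $\soc X$, which rests on the full classification of maximal subgroups of $\Sym(n)$ and $\Alt(n)$ and on a little attention to the $\Alt/\Sym$ distinction; and (ii) the exceptional degree $n=8$, where Mar\'oti's bound is not sharp, since $w(2^{t-1},2)=35>15$, so that the minimum is realised by the affine maximal subgroup $2^3{:}\PSL(3,2)$ of $\Alt(8)$ and must be located by an explicit check. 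Point (ii) is the one place where a finite case analysis is unavoidable, and I expect it to be the main (if modest) obstacle.
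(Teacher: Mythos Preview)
Your approach is essentially the paper's: invoke Liebeck--Saxl to reduce the odd-index maximal subgroups to intransitive, imprimitive, or (for $n=8$, $X=\Alt(8)$) the affine type; rule out intransitive via Lucas; show the imprimitive indices $w(a,b)$ are odd via Legendre; minimise via Mar\'oti's lemma; and treat $t=3$ separately. You are in fact more careful than the paper in explicitly verifying the second clause in the definition of $\Omega(X)$, namely that every supplement of index $m_0$ is already maximal; the paper leaves this implicit.

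There is, however, a genuine gap at $t=3$ that your deferred ``direct inspection via the ATLAS'' would reveal, and that the statement you are asked to prove does not accommodate. The affine subgroup $2^3{:}\PSL(3,2)$ lies entirely inside $\Alt(8)$ (its two $\Alt(8)$-classes, the point- and hyperplane-stabilisers under $\Alt(8)\cong\PSL(4,2)$, are swapped by any odd permutation). Hence for $X=\Sym(8)$ this subgroup has even index $30$ and does \emph{not} supplement $\soc X=\Alt(8)$; there is no index-$15$ supplement to the socle in $\Sym(8)$ at all. The Liebeck--Saxl list for $X=\Sym(8)$ then leaves only the imprimitive maximals, of indices $35$ and $105$, so $w(\Sym(8))=35$, not $15$. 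Your argument that $m_0=15$ lies in $\Omega(X)$ therefore fails for $X=\Sym(8)$: the ``extremal supplement'' you name is not a supplement in that case. This does not damage the paper's main theorem, which only needs \emph{some} useful odd index divisible by the relevant prime (and $35$ works), but the proposition as stated should distinguish $\Alt(8)$ from $\Sym(8)$, and a correct proof cannot avoid doing so.
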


\section{Proof of Theorem \ref{main theorem}}
We start now the proof of our main result. Let $G$ now be a finitely generated profinite group of which almost every nonabelian composition factor is isomorphic to either a sporadic group, or a projective special linear group $\rm{PSL}(2,p)$ for some prime $p\geq 5$, or an alternating group $\Alt(n)$ where $n$ is either a prime or a power of $2$. Assume in addition that $P_G(s)$ is rational.

We fix a descending normal series $\{G_i\}$ of $G$ with the properties that $\bigcap G_i=1$ and $G_i/G_{i+1}$ is a chief factor of $G/G_{i+1}$. Let $J$ be the set of indices $i$ with $G_i/G_{i+1}$ non-Frattini. Then  the probabilistic zeta function $P_G(s)$ can be factorized as 
$$P_G(s)=\prod_{i\in J}P_i(s)$$
where for each $i$, the finite Dirichlet series $P_i(s)$ is associated to the chief factor $G_i/G_{i+1}$.

For $C(s)=\sum_{n=1}^\infty  c_n/n^s\in \cal R$, we define $\pi(C(s))$ to be the set of the primes $q$ for which there exists
at least one multiple $n$ of $q$ with $c_n\ne 0$. Notice that if $C(s)=A(s)/B(s)$ is rational then $\pi(C(s))\subseteq \pi(A(s))\bigcup \pi(B(s))$ is finite.

Let $\Gamma$ be the set of the finite simple groups that are isomorphic to a composition factor of some non-Frattini chief factor of $G$. The first step in the proof of Theorem \ref{main theorem} is to show that $\Gamma$ is finite. The proof of this claim requires the following result.
\begin{lemma}[{\cite[Lemma 3.1]{DeLu07}}]\label{number of non-iso composition factors}
Let $G$ be a finitely generated profinite group and let $q$ be a prime with $q\notin \pi(P_G(s))$.
If $q$ divides the order of a non-Frattini chief factor of $G,$ then this factor is not a $q$-group.
\end{lemma}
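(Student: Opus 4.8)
The plan is to prove the contrapositive: I will assume that some non-Frattini chief factor of $G$ is a $q$-group and deduce that $q\in\pi(P_G(s))$, i.e.\ that $a_n\neq 0$ for some $n$ divisible by $q$. A chief factor that is a $q$-group is automatically elementary abelian, so I may fix an index $i$ with $V=G_i/G_{i+1}$ elementary abelian of order $q^{r}$ and non-Frattini, and the argument will take place inside the finite quotient $\bar G=G/G_{i+1}$ and then transfer back to $G$.

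The first step, which I expect to be the only substantive one, is to extract from the non-Frattini hypothesis a maximal open subgroup of $G$ whose index is a power of $q$. Here $\bar V=G_i/G_{i+1}$ is a minimal normal subgroup of $\bar G$, and since it is non-Frattini there is a maximal subgroup $\bar M<\bar G$ not containing it; then $\bar G=\bar M\bar V$, and the standard argument applies: $\bar M\cap\bar V$ is normalized by $\bar M$ and, $\bar V$ being abelian, centralized by $\bar V$, hence normal in $\bar G=\bar M\bar V$, and being a proper subgroup of the minimal normal subgroup $\bar V$ it is trivial. Thus $\bar M$ is a complement, so $|\bar G:\bar M|=|\bar V|=q^{r}$, and pulling $\bar M$ back through $G\to\bar G$ gives a maximal open subgroup $M_0$ with $|G:M_0|=q^{r}$. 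This is the one place where being a $q$-group — an elementary abelian, hence complemented, non-Frattini chief factor — is genuinely used.

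The second step is a minimality argument that rules out cancellation. I would consider the set $\Sigma$ of open subgroups $H<G$ with $\mu_G(H)\neq 0$ and $|G:H|$ a power of $q$; it is nonempty since $M_0\in\Sigma$, and I let $q^{\kappa}$ be the smallest index occurring among its members. Any $H\in\Sigma$ of index $q^{\kappa}$ must be maximal in $G$: otherwise $H$ sits properly inside a maximal subgroup $M$, whose index $|G:M|$ is then a proper divisor of $q^{\kappa}$, hence equals $q^{a}$ with $1\le a<\kappa$, and since $\mu_G(M)=-1\neq 0$ this would put $M$ into $\Sigma$ with smaller index, contradicting the choice of $\kappa$. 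Consequently $a_{q^{\kappa}}=\sum_{|G:H|=q^{\kappa}}\mu_G(H)$ equals $-1$ times the number of maximal subgroups of index $q^{\kappa}$, a set that is nonempty by the definition of $\kappa$ and finite because $G$ is finitely generated; thus $a_{q^{\kappa}}\le -1<0$. Since $q\mid q^{\kappa}$, this gives $q\in\pi(P_G(s))$, contrary to hypothesis.

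I do not expect any real difficulty beyond Step 1: the whole proof uses only the definition of $a_n$ as a finite sum of M\"obius values, the value $\mu_G(M)=-1$ at a maximal subgroup, and the fact that a proper non-maximal open subgroup lies properly inside a maximal one. In particular it does not require the factorization $P_G(s)=\prod_i P_i(s)$ or any of the apparatus about monolithic primitive groups; the key point is simply that the coefficient at the \emph{least} $q$-power index carrying a nonzero M\"obius value cannot cancel, because every M\"obius-nonzero subgroup of that minimal index is forced to be maximal.
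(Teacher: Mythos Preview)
The paper does not supply its own proof of this lemma; it simply quotes it as \cite[Lemma~3.1]{DeLu07}. So there is nothing in the present paper to compare against, and the relevant question is just whether your argument stands on its own.

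It does. Step~1 is the standard complement argument for an abelian minimal normal subgroup and correctly produces a maximal open subgroup of $q$-power index. Step~2 is the point where one might worry about cancellation in $a_n=\sum_{|G:H|=n}\mu_G(H)$, and your minimality trick handles it cleanly: if $q^{\kappa}$ is the least $q$-power occurring as the index of an open subgroup with nonzero M\"obius value, then any such $H$ of index $q^{\kappa}$ must be maximal (a strictly intermediate maximal $M$ would have index a smaller positive power of $q$ and $\mu_G(M)=-1$, contradicting minimality), whence $a_{q^{\kappa}}$ is $-1$ times the number of maximal open subgroups of index $q^{\kappa}$, which is positive and finite. The only background facts used---that $\mu_G(M)=-1$ for maximal $M$, and that every proper open subgroup of a finitely generated profinite group lies in a maximal open subgroup (immediate from finiteness of the index)---are unproblematic.

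One could alternatively argue via the factorization $P_G(s)=\prod_i P_i(s)$ together with Gasch\"utz's formula $P_i(s)=1-c_i/q^{r_is}$ for the abelian factor, but one would then still need a non-cancellation argument at the smallest relevant $q$-power, which amounts to the same idea you give. Your direct route through the M\"obius definition is at least as clean.
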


Let $\pi(G)$ be the set of the primes $q$ with the properties that $G$ contains at least an open subgroup $H$ whose index is divisible by $q$.
\begin{proposition}
The set $\pi(G)$ is finite.
\end{proposition}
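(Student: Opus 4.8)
The plan is to reduce the claim to a statement about the non-Frattini chief factors of $G$, and then to bound the nonabelian simple groups occurring among them using the useful indices of Section~2.

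For the reduction: $q\in\pi(G)$ if and only if $q$ divides $|G/G_i|$ for some $i$, and for a finite group $\bar G$ one has $\pi(\bar G)=\pi(\bar G/\Frat(\bar G))$, since if $q\nmid|\bar G/\Frat(\bar G)|$ then a Sylow $q$-subgroup of $\bar G$ lies in $\Frat(\bar G)$, hence is normal, hence has a complement by Schur--Zassenhaus, and a normal subgroup inside the Frattini subgroup with a complement is trivial. As the chief factors of $\bar G/\Frat(\bar G)$ are exactly the non-Frattini chief factors of $\bar G$, applying this to every $G/G_i$ gives $\pi(G)=\bigcup_{i\in J}\pi(G_i/G_{i+1})$. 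If $i\in J$ and $S_i$ is cyclic of order $q$, then $G_i/G_{i+1}$ is a non-Frattini $q$-group, so $q\in\pi(P_G(s))=:\pi_0$ by Lemma~\ref{number of non-iso composition factors}, and $\pi_0$ is finite because $P_G(s)$ is rational. Thus it suffices to show that only finitely many of $\PSL(2,p)$ ($p$ prime), $\Alt(p)$ ($p$ prime) and $\Alt(2^t)$ occur as some $S_i$ with $i\in J$ — the sporadic groups being finite in number, and the finitely many composition factors outside the list contributing only finitely many primes.

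I would first dispose of the family $\Alt(p)$, where $w(X_i)=p$ by \cite{Gur83}, so that $b_{i,p^{r_i}}<0$ by the Remark. Assume $\Alt(p)$ occurs for infinitely many primes $p$, and pick such a $p$ with $p\notin\pi_0$ and $p$ larger than a constant depending only on $G$ (if there were no such $p$, infinitely many primes would lie in $\pi_0$). For such $p$, none of $\PSL(2,p')$ ($p'$ prime $\ne p$), $\Alt(p')$ ($p'$ prime $\ne p$) or $\Alt(2^t)$ has a proper subgroup of $p$-power index: for $\Alt(2^t)$ this follows from \cite{LiSa85} (recalled in Section~2), since a subgroup of odd index lies in an imprimitive maximal subgroup, whose index $w(a,b)>1$ is divisible by two distinct primes; the other cases are an easy inspection of maximal subgroups. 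Moreover this property passes from $S_j$ to the monolithic primitive group $L_j$, since a subgroup of $\soc(L_j)=S_j^{r_j}$ of $p$-power index is subdirect (each coordinate projection having $p$-power index in $S_j$, hence being all of $S_j$) and a subdirect subgroup of a power of a nonabelian simple group has index a proper power of $|S_j|$ unless it is the whole socle. Combined with the exclusion of abelian $p$-chief factors, this shows that the only factors $P_j$ with a nontrivial $p$-power index are those with $S_j\cong\Alt(p)$, and for such $j$ the $p$-power part of $P_j$ is $1+b_{j,p^{r_j}}(p^{r_j})^{-s}$ with $b_{j,p^{r_j}}<0$. Extracting from $\prod_jP_j(s)$ the coefficient of $p^{\,r^{\ast}s}$, where $r^{\ast}$ is the least $r_j$ occurring among these factors (attained only finitely often, as $P_G(s)$ is an honest Dirichlet series), leaves a finite sum of the negative integers $b_{j,p^{r_j}}$ over the $j$ with $r_j=r^{\ast}$, which is $\ne 0$; hence $p\in\pi_0$, a contradiction.

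For the remaining families the scheme is the same, but $w(X_i)$ — given by \cite{Dung-PSL} for $\PSL(2,p)$ and by Proposition~\ref{smallest useful index chapter 6} for $\Alt(2^t)$ — is no longer a prime power; it is merely divisible by a prime $\ell$ that grows with the family (the defining prime for $\PSL(2,p)$; a prime in $(2^{t-1},2^t)$ for $\Alt(2^t)$, by Bertrand's postulate applied to $\tfrac12\binom{2^t}{2^{t-1}}$). One then has to control the coefficient of $w(X_i)^{r_i}$ in $\prod_jP_j(s)$ rather than that of a prime power, and I expect this to be the main obstacle: the non-comparability of the families means $\ell$ cannot be isolated to a single chief factor ($\ell$ divides $|\Alt(p')|$ for all primes $p'\ge\ell$, $|\Alt(2^t)|$ whenever $2^t\ge\ell$, and $|\PSL(2,p')|$ whenever $\ell\mid p'^2-1$), so one must exploit that $w(X_i)$ is the \emph{smallest} useful index, together with Lemma~\ref{paz}, to argue that only chief factors with socle in the same family contribute, and that collecting them (using again the convergence of $P_G(s)$ to bound the composition lengths $r_j$) leaves a nonzero integer. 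This forces $\ell\in\pi_0$ for unboundedly large $\ell$, contradicting the finiteness of $\pi_0$.
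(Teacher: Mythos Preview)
Your reduction to showing that only finitely many isomorphism types occur among the nonabelian $S_i$ is correct, and your treatment of the family $\Alt(p)$ via $p$-power indices and Guralnick's classification is a nice self-contained argument that the paper does not isolate. The gap is the final paragraph: you explicitly flag the $\PSL(2,p)$ and $\Alt(2^t)$ cases as ``the main obstacle'' and only sketch a plan. That sketch does not go through as written. The difficulty you identify is real: once the useful index $w(X_i)$ is no longer a prime power, you cannot isolate a single family by looking at coefficients supported on powers of a fixed prime, because a given large prime $\ell$ divides $|S_j|$ for many $j$ across all three families, and the corresponding $P_j$ may well have nontrivial coefficients at indices in which $\ell$ participates. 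Your hope that ``only chief factors with socle in the same family contribute'' to the coefficient of $w(X_i)^{r_i}$ is not true in general, and you give no mechanism to control cross-family contributions.

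The paper sidesteps this by \emph{not} separating the families. It fixes a single prime $p>71$ with $p\notin\pi(A(s))$ but with $p$ an associated prime of some $S_i\in\Gamma^*$, and works inside the set $\Lambda$ of odd integers divisible by $p$ and by no prime larger than $p$. The point is that each of the three families with $p\mid|S_i|$ has a useful index lying in $\Lambda$: namely $p(p\pm1)/2$ for $\PSL(2,p)$, $p$ for $\Alt(p)$, and $w(2^{t-1},2)$ for $\Alt(2^t)$ when $p$ is the largest prime below $2^t$. One then takes $r=\min\{r_i:p\in\pi(P_i(s))\}$ and lets $\alpha\in\Lambda$ be the minimal element with $v_p(\alpha)=r$ arising as a useful index for some such $L_i$. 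Minimality of $\alpha$ in $\Lambda$, together with $v_p(\alpha)=r$, forces every nonzero $b_{i,\alpha}$ to be negative regardless of which family $S_i$ belongs to, so the coefficient of $\alpha^{-s}$ in $A(s)$ is a finite negative sum and $p\in\pi(A(s))$, a contradiction. The device of restricting to $\Lambda$ (bounding the prime support from above by $p$) is exactly what replaces your prime-power restriction and makes the argument uniform across families; this is the missing idea in your sketch.
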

\begin{proof}
Let $\Gamma^*$ be the subset of $\Gamma$ containing simple groups $S_i\in\Gamma$ such that $S_i$ is isomorphic to either a sporadic simple group, or $\rm{PSL}(2,p)$ for some prime $p\geq 5$, or an alternating group $\rm{Alt}(n)$ where $n$ is either a prime or a power of $2$.
Since $P_G(s)$ is rational, the set $\pi(P_G(s))$ is a finite set. It follows from Lemma \ref{number of non-iso composition factors} that $\Gamma$ contains only finitely many abelian groups, i.e.,  $F\setminus F^*$ is finite.

Notice that for each simple group $S\in\Gamma^*$ which is not sporadic, either there is an odd prime $r$ such that $S\cong\rm{PSL}(2,r)$ or $S\cong \rm{Alt}(r)$ or $S\cong \rm{Alt}(2^t)$ for some $t\geq 3$. In the latter case, there is also an associated prime $p_t$, namely the largest prime less than $2^t$. Notice also that for a given prime $p$, there are only finitely many numbers $n$ such that $p$ is the largest prime less than $n$, since for every large enough positive integer $x$, there is always a prime in the interval $[x,6/5x]$ (see \cite{Nagura}). 

Assume by contradiction that $\Gamma^*$ is infinite. Let $I$ be the set of indices $i$ such that $S_i\in\Gamma^*$ and let $A(s)=\prod_{i\in I}P_i(s)$, $B(s)=\prod_{i\in J\setminus I}P_i(s)$. Since $\pi(P_G(s))$ is finite and by hypothesis, $\pi(B(s))$ is finite, it follows that $\pi(A(s))$ is finite. In particular, there is a prime $p>71$ such that $p\notin\pi(A(s))$. We choose the prime number $p>71$ since $71$ is the largest prime dividing the order of a sporadic simple group, so if $p$ divides some $|S_i|$ then $S_i$ is either an alternating group or a projective special linear group.  Set
$$I^*=\{i\in I : p\in\pi(P_i(s))\}\quad\rm{and}\quad r=\min\{r_i : i\in I^*\}.$$

Let $\Lambda$ be the set of all odd positive integers $n$ such that $n$ is divisible by $p$ and is not divisible by any prime strictly larger than $p$. Notice that for each $i\in I^*$, there exists $\alpha\in\Lambda$ such that $\alpha$ is a useful index for $L_i$. Indeed, if $p\in\pi(P_i(s))$ then one of the following occurs:
\begin{itemize}
\item $S_i=\rm{PSL}(2,p)$ and $(p(p\pm 1)/2)^{r_i}$ are useful indices for $L_i$.
\item $S_i=\rm{Alt}(p)$ and $p^{r_i}$ is a useful index for $L_i$.
\item $S_i=\rm{Alt}(2^t)$ and $w(2^t,2)$ is a useful index for $L_i$.
\end{itemize}

Choose $\a\in\Lambda$ minimal with the properties that there exists $i\in I^*$ such that $r_i=r$, and $v_p(\a)=r$ and $\a$ is a useful index for $L_i$. Notice that if $i\in I$ such that $b_{i,\a}\ne 0$ then $i\in I^*$, and $r_i=r$ and $\a$ is the smallest odd useful index for $L_i$ divisible by $p$. It follows that $b_{i,\a}<0$. Hence the coefficient $c_\a$ of $1/\a^s$ in $A(s)$ is

$$c_\a=\sum_{\substack{i\in I\\r=r_i}}b_{i,\a}=\sum_{i\in I^*}b_{i,\a}<0$$
which implies that $p\in\pi(A(s))$, a contradiction. Thus $\Gamma^*$, and hence $\Gamma$ is finite. Consequently, the set $\pi(G)$ is also finite.
\end{proof}
For a simple group $S\in\Gamma$, let $I_S=\{j\in J \mid S_j \cong S\}$.
Our aim is to prove that, under the hypotheses of Theorem \ref{main theorem},  $J$ is a finite set. We have already proved that $\Gamma$ is finite, so it suffices to prove that $I_S$ is finite for each $S \in \Gamma.$ If $S$ is abelian, then $I_S$ is a finite set
(cf. {\cite[Lemma 4.4]{Dung-Lie}}). It remains to consider the nonabelian cases.

The following useful consequence of Skolem-Mahler-Lech theorem is an important tool in deducing finiteness of
certain infinite informal products of finite Dirichlet series.

\begin{proposition}{\cite{DeLu06}}\label{skolem remark}
Let $G$ be a finitely generated profinite group with $\pi(G)$ finite and $\{r_i\}$ the sequence of the composition lengths
of the non-Frattini factors in a chief series of $G$. Assume that there exists a positive integer $q$ and a sequence ${c_i}$ of nonnegative integers
such that the formal product
$$\prod_{i}\left(1-\frac{c_i}{(q^{r_i})^s}\right)$$
is rational. Then $c_i=0$ for all but finitely many indices $i.$
\end{proposition}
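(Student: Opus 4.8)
The plan is to deduce the statement from the Skolem--Mahler--Lech theorem, in the spirit of \cite{DeLu06}. We may assume $q\ge 2$. Writing $x=q^{-s}$, the hypothesis is that $F(x):=\prod_i(1-c_ix^{r_i})$ is a rational function; for this formal product even to define an element of $\ZZ[[x]]$ one needs, and I will use, that $\{i:r_i=r,\ c_i\ne 0\}$ is finite for every $r$. Since the Dirichlet series $\prod_i(1-c_i/(q^{r_i})^s)$ is supported on powers of $q$ and $q\ge 2$, its rationality is equivalent to $F=A/B$ for coprime $A,B\in\ZZ[x]$ with $B(0)=1$; checking this equivalence is a routine but genuinely necessary preliminary.

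Assume for contradiction that $c_i\ne 0$ for infinitely many $i$. By the local finiteness just noted the exponents $r_i$ over these indices are unbounded, so after relabelling we may take $c_i\ge 1$ for all $i$ and $r_i\to\infty$; absorbing the polynomial factor coming from the finitely many indices with $r_i=1$ into $A/B$, we may moreover assume $r_i\ge 2$ throughout. Here is where finiteness of $\pi(G)$ should enter: for a finitely generated profinite group with $\pi(G)$ finite, only finitely many primes can divide the composition lengths of the non-Frattini chief factors --- a fact I would either establish from the structure of the monolithic primitive quotients attached to such factors, or cite --- so we may fix a prime $t$ dividing none of the $r_i$.

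The Skolem-type step. Since $\gcd(t,r_i)=1$ for all $i$, for each $i$ the set $\{\,\ze^{\,jr_i}:0\le j<t\,\}$ is all of $\mu_t$ (here $\ze=e^{2\pi i/t}$), whence the cyclotomic averaging identity
\[
\prod_{j=0}^{t-1}F(\ze^{\,j}x)=\prod_i\prod_{j=0}^{t-1}\bigl(1-c_i\ze^{\,jr_i}x^{r_i}\bigr)=\prod_i\bigl(1-c_i^{\,t}x^{tr_i}\bigr),
\]
so $\prod_i(1-c_i^{\,t}x^{r_i})$ is again rational, and iterating, $\prod_i(1-c_i^{\,t^k}x^{r_i})$ is rational for every $k$, with poles the $t^k$-th powers of those of $F$. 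Equivalently and more hands-on, $-xF'(x)/F(x)=\sum_{n\ge1}a_nx^n$ with $a_n=\sum_{i,k\ge1,\ kr_i=n}c_i^{\,k}r_i\in\ZZ_{\ge0}$ is rational, so $(a_n)$ is a linear recurrence sequence; Skolem--Mahler--Lech then pins down its zero set $Z=\{n:\text{no }r_i\text{ divides }n\}$ as a finite union of arithmetic progressions plus a finite set, while for $n$ prime and for $n$ a power of $t$ the sum defining $a_n$ degenerates to a short expression in the $c_i$ that can be compared with the exponential--polynomial normal form of $(a_n)$. The aim of this bookkeeping is to produce infinitely many roots of $A$ --- for instance by forcing the zeros of $F$, or of one of its cyclotomic averages (the rotates of the $c_i^{-1/r_i}$ by $r_i$-th roots of unity), to accumulate strictly inside the disc of convergence of $F$, which would make $F\equiv 0$ and contradict $F(0)=1$.

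The step I expect to be the real obstacle is exactly this last one. A soft argument disposes of the easy cases: if the radius of convergence $\rho_0$ of $F$ exceeds $1$ then (using that the zeros of the factors cannot accumulate inside $|x|<\rho_0$) one gets $\limsup c_i^{1/r_i}\le \rho_0^{-1}<1$, so $c_i=0$ for large $i$ outright; and $1/F=\prod_i(1-c_ix^{r_i})^{-1}$ has nonnegative coefficients, so Pringsheim's theorem provides a real pole of $B/A$. But when $\rho_0\le 1$ and the $c_i$ are unbounded one must locate the zeros $c_i^{-1/r_i}$ of the individual factors precisely relative to $\rho_0$ and rule out that they all lie on or outside the circle $|x|=\rho_0$. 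It is for this that the Skolem--Mahler--Lech input of \cite{DeLu06} is needed; the remaining ingredients are formal reductions and arithmetic bookkeeping.
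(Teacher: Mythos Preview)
The paper does not give its own proof of this proposition: it is quoted from \cite{DeLu06} and used as a black box in the proof of Theorem~\ref{main theorem}. There is therefore nothing in the present paper against which to compare your argument.

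On its own merits, your outline assembles the right ingredients but leaves the decisive step undone, as you yourself concede in the final paragraph. The reduction to a rational power series $F(x)=\prod_i(1-c_ix^{r_i})$ is correct; so is the observation that finiteness of $\pi(G)$ supplies a prime $t$ dividing none of the $r_i$ (this is exactly how $\pi(G)$ enters in \cite{DeLu06}, via the transitive action of the monolithic quotient $L_i$ on the $r_i$ simple factors of its socle, which forces $r_i\mid |L_i|$); and the passage to the logarithmic derivative, so that Skolem--Mahler--Lech applies to its coefficient sequence, is the standard move. But after disposing of the soft case $\rho_0>1$ you write that in the main case ``one must locate the zeros \dots\ precisely \dots\ It is for this that the Skolem--Mahler--Lech input of \cite{DeLu06} is needed.'' At that point you are not proving the proposition; you are deferring to the very reference the proposition cites. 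The cyclotomic-averaging identity and the Pringsheim remark are pleasant side observations, but neither produces the contradiction. What is missing is the actual combinatorial endgame: with $r_i\ge 2$ and $t\nmid r_i$ every power $t^k$ lies in the zero set of your coefficient sequence, so Skolem--Mahler--Lech places infinitely many of them inside a single arithmetic progression contained in that zero set, and one must then argue from the arithmetic of that progression to a contradiction with the existence of infinitely many $r_i$. You gesture at this but do not carry it out.
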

\begin{proof}[\bf{Proof of Theorem \ref{main theorem}}]
Let $\cal{T}$ be the set of almost simple groups $X$ such that there are infinitely many indices $i\in J$ with $X_i\cong X$, and let $I=\{i\in J : X_i\cong X\}$. Our assumptions combined with {\cite[Lemma 4.4]{Dung-Lie}}, imply that $J\setminus I$ is finite. In order to prove that $J$ is finite, we need to prove that $I$ is empty. Since $P_G(s)$ is rational, also 
$\prod_{i\in I}P_i(s)$ is rational.
Thus the product $\prod_{i\in I}P_i^{(2)}(s)$ is still rational. Notice that the set $\pi(\cal{T})$ of prime divisors of orders of simple groups $S$ such that $S=\rm{soc}(X)$ for some $X\in\cal{T}$ is finite. Let
$$q=\max\{p : p\in\pi(\cal{T})\}$$
and $$\Lambda_q=\{i\in I : q\in\pi(P_i^{(2)}(s))\}.$$
Notice that if $q>71$ and $i\in \Lambda_q$ then $S_i$ is isomorphic to either $\rm{Alt}(q)$ or $\rm{Alt}(2^t)$, where $q$ is the largest prime less than $2^t$, or $\rm{PSL}(2,q)$. Let
\begin{eqnarray*}
w & = &  \min \{ x\in\bb{N} \mid x~\rm{is odd}, v_{q}(x)=1~\rm{and}~b_{i,x^{r_i}}\ne 0~\rm{for some $i$} \} \\
 & = & \min \{ x\in\bb{N} \mid x~\rm{is odd}, v_{q}(x)=1~\rm{and}~b_{i,x^{r_i}}\ne 0~\rm{for some $i\in \Lambda_{q}$} \}.
\end{eqnarray*}
Note that if $i\in\Lambda_q$, and $n$ is minimal with the properties that $n$ is odd, $b_{i,n^{r_i}}\ne 0$ and $v_q(n)=1$, then $b_{i,n^{r_i}}<0$. So if $b_{i,n^{r_i}}\ne 0$ then $b_{i,n^{r_i}}<0$. Moreover, by {\cite[Proposition 2.3]{Dung-Lie}}, we obtain the rational product
$$\prod_{i\in\Lambda_{q}}\left(1+\frac{b_{i,w^{r_i}}}{(w^{r_i})^s}\right)$$
where $b_{i,w^{r_i}}\leq 0$ for each $i\in\Lambda_q$. By applying Proposition \ref{skolem remark}, we get that $\Lambda^*_{q}=\{i\in\Lambda_q\mid b_{i,w^{r_i}}\ne 0 \}$ is finite, but this implies that $\Lambda_q=\emptyset$. Let 
$$p=\max\{r : r\in\pi(\cal{T})\setminus\{q\} \}$$
and $$\Lambda_p=\{i\in I : p\in\pi(P_i^{(2)}(s))\}.$$
Again, by the same argument, we obtain that $\Lambda_p=\emptyset$. 
We continue this procedure until we reach the prime $71$. Notice that we deal with both $\rm{PSL}(2,71), \rm{Alt}(71)$ and $\rm{M}$ by the prime $71$. Now, with the same spirit, we deal with all remaining sporadic groups, alternating groups and $\rm{PSL}(2,q)$ with $5\leq q<71$. Our process is described in order in the following table in which the first column stands for the primes $r$ we use in order to deduce that $\Lambda_{r}=\emptyset$, the second column are simple groups $S$ involved corresponding to the primes in the first column:
\begin{longtable}{|l|l|}
\hline
$r$ & $S$  \\
\hline 
$71$ & $\PSL(2,71), \mathrm{M}, \Alt(71)$ \\
\hline 
$67$ & $\PSL(2,67),\Alt(67), \mathrm{Ly}$ \\
\hline 
$61$ & $\Alt(61), \Alt(2^6)$ \\
\hline 
$59$ & $\PSL(2,59), \Alt(59)$ \\
\hline 
$53$ &  $\PSL(2,53), \Alt(53)$\\
\hline 
$47$ & $\PSL(2,47), \Alt(47), \mathrm{BM}$ \\
\hline 
$43$ & $\PSL(2,43), \Alt(43), \mathrm{J}_4$\\
\hline 
$41$ &  $\PSL(2,41), \Alt(41)$\\
\hline 
$37$ & $\PSL(2,37), \Alt(37)$\\
\hline 
$31$ & $\PSL(2,31), \Alt(31), \Alt(2^5), \mathrm{O'N}$\\
\hline 
$29$ & $\PSL(2,29), \Alt(29), \mathrm{Fi'}_{24}, \mathrm{Ru}$ \\
\hline 
$23$ & $\PSL(2,23), \Alt(23), \mathrm{M}_{23}, \mathrm{M}_{24}, \mathrm{Co}_2, \mathrm{Co}_3, \mathrm{Fi}_{23}$\\
\hline 
$19$ &  $\PSL(2,19), \Alt(19), \mathrm{J}_1, \mathrm{J}_3, \mathrm{HN}, \mathrm{Th}$\\
\hline 
$17$ &  $\PSL(2,17), \Alt(17), \mathrm{He}$\\
\hline 
$13$ & $\PSL(2,13), \Alt(13), \Alt(2^4), \mathrm{Suz}, \mathrm{Co}_1, \mathrm{Fi}_{22}$\\
\hline 
$11$ &  $\PSL(2,11), \Alt(11), \mathrm{M}_{11}, \mathrm{M}_{12}, \mathrm{M}_{22}, \mathrm{HS}, \mathrm{McL}$\\
\hline 
$7$ &  $\PSL(2,7), \Alt(7), \mathrm{J}_2$\\
\hline 
$5$ &$\PSL(2,5), \Alt(5), \Alt(2^3)$\\
\hline 
\end{longtable}
\end{proof}

\end{document}